\title[Periodic points on SFTs as a commensurability invariant]{Periodic points on shifts of finite type and commensurability invariants of groups}
\author{David Carroll}
\address{Dept. Of Mathematics, Mailstop 3368, Texas A\&M University, College Station, TX 77843-3368, USA}
\email{carroll@math.tamu.edu}
\author{Andrew Penland}
\address{Dept. of Mathematics and Computer Science, Western Carolina
  University, Stillwell 426, Cullowhee, NC 28723, USA}
\email{adpenland@email.wcu.edu}
\keywords{symbolic dyamics, shifts of finite type}
\subjclass[2010]{37B10, 37B50, 37C25, 52C23}
\DeclareMathOperator{\supp}{supp}
\newcommand\St{\text{Stab}}
\newcommand\Fi{\text{Fix}}
\newcommand\Z{\mathbb{Z}}
\newcommand\Om{\Omega}
\renewcommand\S{\psi_{H,T}}
\newcommand\ov{\overline}
\newcommand\Oom{\overline{\Omega}}
\theoremstyle{plain}
\newtheorem{theorem}{Theorem}
\newtheorem{proposition}[theorem]{Proposition}
\newtheorem{cor}[theorem]{Corollary}
\newtheorem*{conjecture}{Conjecture}
\newtheorem*{question}{Question}
\theoremstyle{definition}
\newtheorem*{definition}{Definition}
\newtheorem*{remark}{Remark}
\numberwithin{equation}{section}
\begin{document} 
 
\begin{abstract}
We explore the relationship between subgroups and the possible shifts of finite type (SFTs) which can be defined on a group. In particular, we investigate two group invariants, weak periodicity and strong periodicity, defined via symbolic dynamics on the group. We show that these properties are invariants of commensurability.
Thus, many known results about periodic points in SFTs defined over groups are actually results about entire commensurability classes.
Additionally, we show that the property of being not strongly periodic (i.e. the property of
having a weakly aperiodic SFT) is preserved under extensions
with finitely generated kernels.
We conclude by raising questions and conjectures about the relationship of these invariants to the geometric notions of quasi-isometry and growth. 
\end{abstract} 
\maketitle

\section{Introduction}

Given a group $G$ and a finite alphabet $A$, a \emph{shift} (over $G$) is a non-empty compact, $G$-equivariant subset of the \emph{full shift} $A^G$, where $A^G$ is a topological space regarded as a product of discrete spaces.
A \emph{shift of finite type} (SFT) $S$ over $G$ is a shift defined by specifying a finite set of patterns
that are forbidden from appearing in elements of $S$.

 It is natural to ask how properties of the group are related to the dynamical properties of its SFTs. Since it is possible to define essentially equivalent SFTs on a group using different alphabets and different sets of forbidden patterns, we should seek properties which are defined in terms of \emph{all} possible SFTs defined on the group. 

One of the most natural properties of a dynamical system is the existence and type of its periodic points. We say that $G$ is \textit{weakly periodic} if every SFT defined on $G$ has a periodic point, and we say $G$ is \emph{strongly periodic} if every SFT defined on $G$ has a periodic point with finite orbit.
It should be noted that \emph{weakly periodic} is precisely the negation of what many authors refer to as
``the existence of a strongly aperiodic SFT.''

When the shifts in question are not required to be of finite type, the
question of periodic points is settled. Gao, Jackson, and Seward
proved that every countable group has a shift with no periodic
points~\cite{GaoJacksonSeward}. Previously, Dranishnikov and Schroeder had proved
that having a shift without strongly periodic points is a
commensurability invariant~\cite{DranishnikovSchroeder}. They also constructed
strongly aperiodic shifts on torsion-free hyperbolic groups and Coxeter groups.

Our objective is to show that for a group $G$, the dynamical
properties of SFTs on $G$ are influenced by its finite index subgroups.  We prove the following results.

\begin{theorem}\label{t:rigidly-periodic}
Let $G_1$ and $G_2$ be finitely generated
commensurable groups.  If $G_1$ is weakly periodic,
then $G_2$ is weakly periodic.
\end{theorem}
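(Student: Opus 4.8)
The plan is to reduce the statement to the case of a single finite-index inclusion and then recombine. Since $G_1$ and $G_2$ are commensurable there are finite-index subgroups $H_1\le G_1$ and $H_2\le G_2$ with $H_1\cong H_2$; both $H_i$ are finitely generated, being finite-index subgroups of finitely generated groups, and weak periodicity is visibly an isomorphism invariant. So it suffices to prove, for a finitely generated group $G$ with a finite-index subgroup $H\le G$, the two implications: (a) if $G$ is weakly periodic then so is $H$; (b) if $H$ is weakly periodic then so is $G$. Chaining $G_1\ge H_1\cong H_2\le G_2$ through (a), the isomorphism, and (b) then yields the theorem (indeed the biconditional, consistent with commensurability being symmetric).

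For (b), restrict the group action from $G$ to $H$. Choosing a finite transversal $T$ with $G=\bigsqcup_{t\in T}Ht$ gives an $H$-equivariant bijection $A^G\to (A^T)^H$, $x\mapsto\big(h\mapsto(t\mapsto x(ht))\big)$, carrying the restriction to $H$ of the shift action into the $H$-shift. Under this identification an arbitrary SFT $Y$ on $G$ becomes a nonempty SFT over $H$: each forbidden pattern of $Y$ has finite support meeting only finitely many cosets $Ht$, so it translates into finitely many forbidden $A^T$-patterns on $H$. Weak periodicity of $H$ then produces a point of this $H$-SFT fixed by some $h\ne e$ in $H$; transported back it is a point of $Y$ fixed by $h\ne e$ in $G$, i.e. a periodic point of $Y$. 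As $Y$ was arbitrary, $G$ is weakly periodic.

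For (a) --- the substantive direction --- let $X$ be an SFT on $H$ over an alphabet $A$ with finite forbidden set $\mathcal{F}$. Form the ``coset SFT'' $Y\subseteq A^G$ of all $y$ such that for every $g\in G$ the reindexed restriction $h\mapsto y(gh)$ of $y$ to the left coset $gH$ lies in $X$. Since $X$ is $H$-invariant this condition is independent of the representative of $gH$ and is $G$-invariant; it is cut out by the patterns of $\mathcal{F}$ regarded as patterns on $G$, so $Y$ is an SFT on $G$, nonempty because one can copy a single element of $X$ onto every coset. If a periodic point $y$ of $Y$ is fixed by an element $g_0\ne e$ that fixes some left coset $g_1H$, then $h:=g_1^{-1}g_0g_1\in H$ is nontrivial and fixes the reindexed restriction $h'\mapsto y(g_1h')\in X$, giving a periodic point of $X$. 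The obstacle is that an arbitrary periodic point of $Y$ might be fixed only by a torsion element lying in no conjugate of $H$, and such a point carries no information about $X$.

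To force the periods into conjugates of $H$, tensor with a ``coset-marker'' SFT. Put $R=H\backslash G$ and let $Y_1\subseteq R^G$ be the SFT defined, relative to a finite generating set $S$ of $G$, by the local rules $q(gs)=q(g)\cdot s$ for $s\in S$ --- finitely many forbidden patterns, using finite generation of $G$. An induction on word length shows $Y_1$ is exactly the $G$-orbit of the configuration $g\mapsto Hg$, so it is nonempty and every $q\in Y_1$ has stabilizer a conjugate of $H$. Now apply weak periodicity of $G$ to the nonempty SFT $Y\times Y_1$ with the diagonal action: a periodic point $(y,q)$ is fixed by some $g_0\ne e$, and since $q$ is fixed by $g_0$ we get $g_0\in g_1Hg_1^{-1}$ for some $g_1$, so $g_0$ fixes the left coset $g_1H$; the preceding paragraph then extracts a periodic point of $X$. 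Hence $H$ is weakly periodic, which proves (a) and completes the argument. The one genuinely new device is the marker SFT $Y_1$; everything else is the routine bookkeeping that SFTs transfer between a group and its finite-index subgroups.
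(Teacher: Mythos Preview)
Your proof is correct and follows essentially the same two-step strategy as the paper: your (b) is Proposition~\ref{virtual} via the higher block construction, and your (a) is Proposition~\ref{hereditary} via a product with a marker SFT that pins stabilizers into the subgroup. The only notable variation is that your coset-marker $Y_1$ (alphabet $H\backslash G$, rule $q(gs)=q(g)\cdot s$) handles an arbitrary finite-index $H$ directly, whereas the paper first reduces to the normal case using Proposition~\ref{virtual} and then applies its locked shift; the two marker shifts are close cousins, and your version simply saves that reduction step.
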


\begin{theorem}\label{t:strongly-periodic}
Let $G_1$ and $G_2$ be finitely generated commensurable groups. If $G_1$ is strongly periodic, then $G_2$ is strongly periodic. 
\end{theorem}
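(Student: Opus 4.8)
The plan is to reduce Theorem~\ref{t:strongly-periodic} to two ``one-step'' statements about a single finite-index inclusion and then chain them. Since $G_1$ and $G_2$ are commensurable, there are finite-index subgroups $H_i\le G_i$ with $H_1\cong H_2$ (and the $H_i$ are finitely generated, by Schreier's lemma); strong periodicity is visibly an isomorphism invariant. So it is enough to prove, for a finite-index inclusion $H\le G$: \textbf{(A)} if $G$ is strongly periodic, then so is $H$; and \textbf{(B)} if $H$ is strongly periodic, then so is $G$. Granting these, strong periodicity of $G_1$ gives that of $H_1$ by (A), hence of $H_2$, hence of $G_2$ by (B).

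Both directions hinge on one elementary observation. For a configuration $x$ over $G$, the $H$-action being the restriction of the $G$-action, we have $\Stab_H(x)=\Stab_G(x)\cap H$; hence $x$ has finite $G$-orbit if and only if it has finite $H$-orbit, since the intersection of two finite-index subgroups has finite index and a finite-index subgroup of $H$ has finite index in $G$. This is precisely where ``strongly periodic'' transfers more easily than ``weakly periodic'': a nontrivial $G$-stabilizer may entirely miss $H$, but a finite-index $G$-stabilizer always meets $H$ in a finite-index subgroup.

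For (B), take an SFT $Y\subseteq A^G$ and regard it as an $H$-system by restricting the action. Fixing $n=[G:H]$ and a right transversal $t_1,\dots,t_n$ for $H$ in $G$ gives a homeomorphism $A^G\cong(A^n)^H$ that intertwines this $H$-action with the coordinatewise $H$-shift, and one checks that it carries $Y$ to a genuine SFT over $H$ (with alphabet $A^n$): a forbidden pattern of $Y$ on a finite $E_0\subseteq G$ unfolds, through the coset identities $t_ie_0=h_{i,e_0}\,t_{k(i,e_0)}$, into finitely many forbidden patterns on finite subsets of $H$, and nonemptiness and compactness pass across the bijection. Strong periodicity of $H$ now produces a finite-$H$-orbit point of this SFT, which by the observation above is a finite-$G$-orbit point of $Y$. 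For (A) I would use the dual ``induced SFT''. Fix a left transversal $g_1=e,g_2,\dots,g_n$, let $Y\subseteq A^H$ be an SFT defined by forbidden patterns on a finite $E_0\subseteq H$, view $E_0$ inside $G$ via $H=g_1H\subseteq G$, and let $X\subseteq A^G$ be the SFT over $G$ defined by exactly these forbidden patterns. A short computation identifies $X$ with $\{x\in A^G:\ h\mapsto x(g_ih)\ \text{belongs to}\ Y\ \text{for each}\ i\}$; in particular $X$ is nonempty (configurations constant on cosets, copying a chosen element of $Y$) and $G$-invariant. Strong periodicity of $G$ gives $x\in X$ with $\Stab_G(x)$ of finite index; then $y:=x|_H\in Y$ satisfies $\Stab_H(y)\supseteq\Stab_G(x)\cap H$, which is of finite index in $H$, so $y$ has finite $H$-orbit and $H$ is strongly periodic.

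The topological checks (closedness, compactness, nonemptiness) and the verification that the induced set in (A) is $G$-invariant with the stated description are routine. The one genuinely fiddly step is the claim in (B) that the finite-type property survives the transversal encoding: one must bookkeep precisely which finite subsets of $H$ carry the unfolded forbidden patterns and confirm that only finitely many occur. The finite-generation hypothesis does not seem to be needed for the SFT manipulations themselves; it serves only to keep the common finite-index subgroup finitely generated.
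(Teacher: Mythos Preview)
Your proposal is correct and follows the same two-step reduction as the paper: prove (A) that strong periodicity passes to finite-index subgroups and (B) that it passes to finite-index overgroups, then chain through a common $H$. Your direction (B) is exactly the paper's higher block construction (Section~\ref{HigherBlock} and Proposition~\ref{virtual}), phrased in terms of a transversal encoding $A^G\cong (A^n)^H$; the ``fiddly'' bookkeeping you flag is precisely what the paper packages as the intersection $I\cap J$ of the higher block shift with an auxiliary SFT $J$.

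The one substantive difference is in (A). The paper (Proposition~\ref{hereditary}) proves the weak and strong cases simultaneously by taking the product of the lifted SFT $S'$ with the $H$-\emph{locked shift} $L$, which forces $\St_G(y)\subseteq H$; this is genuinely needed for the weak case, since a nontrivial $\St_G$ could otherwise miss $H$ entirely. You bypass the locked shift by observing that for \emph{strong} periodicity it is unnecessary: a finite-index $\St_G(x)$ always meets $H$ in a finite-index subgroup, so $\St_H(x|_H)\supseteq \St_G(x)\cap H$ already has finite index. This is a legitimate simplification for the strong case and, as you note, removes the only place where the paper actually uses finite generation (the locked shift needs a finitely generated normal subgroup). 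It does not, however, adapt to Theorem~\ref{t:rigidly-periodic}; the paper's locked-shift device is what makes the weak-periodicity direction go through.
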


\begin{theorem}\label{t:extension}
Suppose $\displaystyle{1 \rightarrow N \rightarrow G \rightarrow Q \rightarrow 1}$ is
a short exact sequence of groups and $N$ is finitely generated.  If $G$ is strongly
periodic, then $Q$ is strongly periodic.
\end{theorem}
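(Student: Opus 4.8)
The plan is to reduce the statement to a single transfer of SFTs. Given an arbitrary SFT $S$ on $Q$, I will build an SFT $\widetilde S$ on $G$ whose configurations are exactly the ones that factor through the quotient $\pi\colon G\to Q$, apply the strong periodicity of $G$ to $\widetilde S$, and push the resulting finite‑orbit point back down to $S$. Since $S$ is arbitrary, this yields strong periodicity of $Q$.

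For the construction, I would fix a finite generating set $\{n_1,\dots,n_k\}$ of $N$ — this is the only place finite generation is used — together with a set‑theoretic section $\sigma\colon Q\to G$ of $\pi$. If $S\subseteq A^Q$ has finite forbidden set $\mathcal F$, define $\widetilde S\subseteq A^G$ by forbidding: (i) for every $i$ and every pair of distinct letters $a\ne b$ in $A$, the pattern on $\{e,n_i\}$ with values $a$ and $b$; and (ii) for each $q\colon D\to A$ in $\mathcal F$, the pattern $\widetilde q\colon\sigma(D)\to A$ given by $\widetilde q(\sigma(d))=q(d)$. The type‑(i) patterns force every $x\in\widetilde S$ to satisfy $x(g)=x(gn_i)$ for all $g$ and all $i$, hence to be constant on each coset $gN$, so that $x=y\circ\pi$ for a unique $y\in A^Q$; granted this constancy, a short check shows that $x$ contains a type‑(ii) pattern if and only if $y$ contains the corresponding pattern of $\mathcal F$, so $\widetilde S=\{\,y\circ\pi:y\in S\,\}$. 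This set is nonempty because $S$ is, so $\widetilde S$ is a legitimate SFT on $G$.

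To transfer periodic points I would record the identity $g\cdot(y\circ\pi)=(\pi(g)\cdot y)\circ\pi$, a one‑line computation with the shift action, which makes $y\mapsto y\circ\pi$ a $\pi$‑equivariant bijection $S\to\widetilde S$. Consequently $\Stab_G(y\circ\pi)=\pi^{-1}(\Stab_Q(y))$, and since $\pi$ is surjective these subgroups have equal index; hence $y\circ\pi$ has finite $G$‑orbit if and only if $y$ has finite $Q$‑orbit. Applying strong periodicity of $G$ to $\widetilde S$ produces a point of finite orbit, necessarily of the form $y\circ\pi$, and then $y$ is a periodic point of $S$ with finite orbit, as required.

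The construction is routine once one guesses the correct $\widetilde S$; the step I expect to need the most care in writing up is the verification that (i)–(ii) carve out exactly $\{y\circ\pi:y\in S\}$ — in particular, that lifting the finite domains $D$ through an arbitrary section loses no information once constancy on cosets is imposed. This is precisely where finite generation of $N$ is essential: with $N$ infinitely generated, the condition ``$x$ factors through $\pi$'' cannot be imposed by finitely many forbidden patterns, which is consistent with the hypothesis of the theorem.
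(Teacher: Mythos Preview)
Your proposal is correct and follows essentially the same route as the paper: the paper likewise lifts the forbidden patterns of $\overline{S}$ through a choice of representatives in $G$, intersects with the SFT $\mathrm{Fix}(N)$ (defined exactly by your type-(i) patterns using a finite generating set of $N$), and then uses the $G/N$-equivariance of the bijection $y\mapsto y\circ\pi$ to pull back a finite-index stabilizer. The only cosmetic difference is that the paper packages your steps as a separate proposition identifying $\{y\circ\pi:y\in \overline{S}\}$ with $S\cap\mathrm{Fix}(N)$ before invoking strong periodicity of $G$.
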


From Theorem \ref{t:extension} we obtain a proof of the following result, which can also
be inferred from the work of Ballier and Stein in \cite[Theorem 1.2]{BallierStein}.

\begin{cor}
If $G$ is a finitely generated group of polynomial growth $\gamma(n) \sim n^d$, where $d \geq 2$,
then $G$ is not strongly periodic.
\end{cor}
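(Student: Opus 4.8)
The plan is to combine Gromov's polynomial growth theorem with Theorems~\ref{t:strongly-periodic} and~\ref{t:extension}, reducing the statement to the known fact that $\Z^2$ is not strongly periodic. Since $G$ has growth $\gamma(n) \sim n^d$, Gromov's theorem provides a finite-index subgroup $H \leq G$ that is finitely generated (being finite-index in the finitely generated group $G$) and nilpotent. Then $H$ and $G$ are commensurable, so by Theorem~\ref{t:strongly-periodic} it suffices to prove that $H$ is not strongly periodic; note also that growth degree is a commensurability invariant, so $H$ still has polynomial growth of degree $d \geq 2$. This reduction to a nilpotent group is genuinely needed, not merely convenient: a virtually nilpotent group can have polynomial growth of arbitrarily high degree while having cyclic --- or even finite --- abelianization, so $G$ itself may admit no surjection onto $\Z^2$. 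I would then produce a short exact sequence $1 \rightarrow N \rightarrow H \rightarrow \Z^2 \rightarrow 1$ with $N$ finitely generated and invoke the contrapositive of Theorem~\ref{t:extension}, using that $\Z^2$ is not strongly periodic: the classical aperiodic tilings of Berger and Robinson furnish a strongly aperiodic SFT on $\Z^2$, so $\Z^2$ is not even weakly periodic.

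The substantive step is constructing a surjection $H \rightarrow \Z^2$. A finitely generated nilpotent group of growth degree at least $2$ is infinite and not virtually cyclic, since virtually cyclic groups have growth degree at most $1$. I claim this forces the free abelian rank of $H^{\mathrm{ab}} = H/[H,H]$ to be at least $2$. Suppose not. Writing $(\gamma_i)$ for the lower central series, the commutator pairing exhibits $\gamma_2/\gamma_3$ as a quotient of $\Lambda^2(H^{\mathrm{ab}})$, which has free rank $0$ and is therefore finite; more generally $\gamma_{i+1}/\gamma_{i+2}$ is a quotient of $H^{\mathrm{ab}} \otimes (\gamma_i/\gamma_{i+1})$, so by induction every factor $\gamma_i/\gamma_{i+1}$ with $i \geq 2$ is finite. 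Since $H$ is nilpotent, this makes $[H,H]$ finite, so $H$ is virtually cyclic --- a contradiction. (Alternatively, the Bass--Guivarc'h formula $d = \sum_i i \cdot \operatorname{rank}(\gamma_i/\gamma_{i+1})$ gives $\operatorname{rank}(H^{\mathrm{ab}}) \geq 2$ immediately.) Composing $H \rightarrow H^{\mathrm{ab}}$ with a projection of the free part onto $\Z^2$ yields the desired surjection; its kernel $N$ is a subgroup of the finitely generated nilpotent group $H$, hence finitely generated, since finitely generated nilpotent groups are Noetherian.

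With $1 \rightarrow N \rightarrow H \rightarrow \Z^2 \rightarrow 1$ and $N$ finitely generated in hand, Theorem~\ref{t:extension} shows that $H$ is not strongly periodic (since $\Z^2$ is not), and then Theorem~\ref{t:strongly-periodic} transfers this conclusion back to $G$. I expect the only real obstacle to be the structural step $H \rightarrow \Z^2$; the remaining ingredients are Gromov's theorem, the classical existence of a strongly aperiodic SFT on $\Z^2$, and the paper's own Theorems~\ref{t:strongly-periodic} and~\ref{t:extension}.
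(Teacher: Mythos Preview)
Your proof is correct and follows essentially the same route as the paper: reduce to a finitely generated nilpotent group via Gromov's theorem and the commensurability invariance of strong periodicity, produce a surjection onto $\Z^2$ with finitely generated kernel (using that finitely generated nilpotent groups are Noetherian), and conclude via Theorem~\ref{t:extension} together with the fact that $\Z^2$ is not strongly periodic. The only notable difference is that the paper cites an external lemma for the existence of the surjection $H \to \Z^2$, whereas you supply a self-contained argument via the rank of $H^{\mathrm{ab}}$.
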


The study of periodic points in SFTs over arbitrary groups is well-established and arises from two major sources. The first is a generalization of classical symbolic dynamics, which considers shift spaces defined on $\mathbb{Z}$ or $\mathbb{Z}^2$, to arbitrary semigroups~\cite{SymbolicHyperbolic} and groups~\cite{CAGroups}. For instance, the group $\mathbb{Z}$ is a strongly periodic group, which follows from the fact (well-known in symbolic dynamics) that every two-sided SFT contains a periodic point and the fact (well-known in group theory) that every subgroup of $\mathbb{Z}$ has finite index. The proof is a straightforward exercise in the use of the well-known \textit{higher block shift}, a notion which we will generalize to arbitrary groups in Section \ref{HigherBlock}.
It is worth noting that the higher block shift for arbitrary groups is implicitly suggested in
~\cite{FreeSymbolicThesis} and ~\cite{CAGroups}.
It follows from work of Piantadosi (\cite{FreeSymbolicThesis}, \cite{FreeSymbolicArticle}) that a free group on $n > 1$ generators is weakly periodic but not strongly periodic.  

A second motivation for studying SFTs over arbitrary groups is
connected to tilings on Riemannian manifolds. Many authors have been
interested in sets of tiles which can tile (tesselate) a manifold
without admitting any translation (i.e. fixed-point-free) symmetries
from the manifold's isometry group. In certain cases, such tilings
naturally lead to shifts on a group. The question of the existence of
a set of tiles which can tesselate the plane $\mathbb{R}^2$ without
any translation symmetries was first asked by Wang~\cite{Wang}, who
was motivated by connections to decidability
problems. Berger~\cite{DominoProblem} constructed such a set of tiles,
now known as \emph{Wang tiles}. From this work, it follows that
$\mathbb{Z}^2$ is neither weakly periodic nor strongly periodic. The
generalization of this result for free abelian groups of rank $n > 2$
is due to Culik and Kari ~\cite{NWangTiles}.  Block and
Weinberger~\cite{BlockWeinberger} used a homology theory connected to
coarse geometry to construct aperiodic tiling systems for a large
class of metric spaces, including all nonamenable manifolds. It follows from their results that nonamenable
groups are not strongly periodic. Mozes~\cite{Mozes} constructed
aperiodic tiling systems for a class of Lie groups which contain
uniform lattices satisfying certain conditions. Mozes also associated
these tiling systems to labelings of vertices of the Cayley graph of a
lattice.  Using a homology theory related to the one used by Block and
Weinberger,
Marcinkowski and Nowak proved that a large class of amenable groups, namely those for which all of maximal subgroups have index not divisible by some prime $p$,
are not strongly periodic~\cite{MarcinkowskiNowak}. This class includes the Grigorchuk group. 

It should be noted that \emph{weakly periodic} is precisely the opposite of what some authors call \emph{strong
aperiodicity} or \emph{the
existence of a strongly aperiodic SFT}, i.e. a SFT on $G$ which has no
$G$-periodic points. Recently, Cohen~\cite{Cohen} has independently
obtained results connecting strong aperiodicity to the coarse geometry
of groups. In particular, he has shown that in the case of finitely
presented torsion-free groups, strong aperiodicity is a quasi-isometry
invariant, and that no group with at least two ends is strongly
aperiodic. Jeandel~\cite{Jeandel} showed that any finitely presented
group with a strongly aperiodic SFT must have decidable word problem,
disproving a conjecture of Cohen's that every one-ended group has a
strongly aperiodic SFT. In addition, Jeandel constructed strongly
aperiodic SFTs on a large class of groups. Cohen also conjectured that
strong aperiodicity is a quasi-isometric invariant for all finitely
generated groups, and this conjecture is still open. Since
commensurability implies quasi-isometry (but not vice versa), a proof
of his conjecture would subsume Theorem ~\ref{t:rigidly-periodic}.
The techniques we use in the paper are not explicitly
geometric. However, Cohen's results, as well as the connection in
other work between SFTs and isometry groups of Riemannian manifolds, suggest that a deeper geometric investigation of SFTs on groups is warranted. 

\textbf{Acknowledgments.} We are indebted to many people without whom this work would not have been possible. This work was motivated by our experiences at the 2014 Rice Dynamics Seminar, and so we thank the organizers and attendees of that conference. In particular, we first became interested of the potential connections between SFTs and group invariants through a talk by Ay\c{s}e \c{S}ahin and discussions with David Cohen.
We also thank Emmanuel Jeandel for providing useful feedback and 
our advisor Zoran \v{S}uni\'{c} for his valuable guidance.
Finally, we thank the anonymous referee for various helpful suggestions,
including several references.

%=================================================================

\section{Background}\label{s:BG}

\subsection{Symbolic dynamics on groups}

Throughout this section, we assume $A$ is a finite discrete set,
called the \emph{alphabet},
and $G$ is an arbitrary group.
The set of all functions $G \rightarrow A$ is called \emph{the
full shift of $A$ over $G$} and is denoted $A^G$.
We endow $A^G$ with the product topology and
the (continuous) left $G$-action defined by $(gx)(h) = x(g^{-1} h)$ for
$g, h^ \in G$, $x \in A^G$.  Elements of $A^G$ are called \emph{configurations}.  Often we just call $A^G$ \emph{the full
shift} if $A$ and $G$ are understood.

A subset $X \subset A^G$ is \emph{shift-invariant} if $gx \in X$
for all $g \in G$, $x \in X$.
The stabilizer group of a configuration $x$ will
be denoted $\St_G(x) = \St(x) = \{ g \in G: gx = x\}$, and we call $x \in A^G$
\emph{weakly periodic}, or just
\emph{periodic}, if $\St(x)$ is nontrivial.
If $\St(x)$ is of finite index in $G$, so that $x$ has finite $G$-orbit, we call $x$ \emph{strongly periodic}.

\begin{definition}
$S \subset A^G$ is a \emph{shift space (or shift) of $A$ over} $G$
if $S$ is closed and shift-invariant.
\end{definition}

An equivalent definition can be formulated as follows.  Call a function
$p : \Omega \rightarrow A$ a \emph{pattern} if $\Omega$ is a finite subset of
$G$ and write $\supp(p) = \Omega$.
We say $p$ \emph{appears in} a configuration $x \in A^G$ if
$(gx)\big|_{\Omega} = p$ for some $g \in G$.  Then we have:

\begin{proposition}
$S \subset A^G$ is a shift if and only if there exists a set of patterns
$\displaystyle{P \subset \bigcup_{\substack{\Omega \subset G \\ \Omega
\text{ finite}}} A^\Omega}$
such that
$S = \{x \in A^G : \text{ no } p \in P \text{ appears in } x\}$.
\end{proposition}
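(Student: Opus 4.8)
The plan is to prove the two implications separately, using in an essential way two features of the space $A^{G}$: the basic open sets of the product topology are exactly the cylinders determined by restrictions to finite subsets of $G$, and (since $A$ is discrete) these cylinders are in fact clopen. Compatibility of the $G$-action with restriction, namely that $(gx)|_{\Omega}$ depends on $x$ only through its values on $g^{-1}\Omega$, will be used throughout.

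For the ``if'' direction, suppose $S = \{x \in A^{G} : \text{no } p \in P \text{ appears in } x\}$ for some set of patterns $P$. Shift-invariance is immediate: whether a pattern $p$ appears in $x$ depends only on the $G$-orbit of $x$, because $p$ appears in $x$ precisely when $(gx)|_{\supp(p)} = p$ for some $g \in G$, a condition unchanged upon replacing $x$ by $g_{0}x$. For closedness I would write the complement as
\[
A^{G} \setminus S \;=\; \bigcup_{p \in P}\;\bigcup_{g \in G} \bigl\{x \in A^{G} : (gx)|_{\supp(p)} = p\bigr\},
\]
and observe that each set $\{x : (gx)|_{\supp(p)} = p\}$ is, after re-indexing, a cylinder on the finite set $g^{-1}\supp(p)$, hence open. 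Thus $A^{G}\setminus S$ is open, so $S$ is closed and therefore a shift. (The convention under which the empty set is a shift is consistent here, via $P = A^{\{1_{G}\}}$.)

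For the ``only if'' direction, suppose $S$ is closed and shift-invariant, and let $P$ be the set of \emph{all} patterns that do not appear in any element of $S$; set $S' = \{x : \text{no } p \in P \text{ appears in } x\}$. The inclusion $S \subseteq S'$ holds by construction. For $S' \subseteq S$ I would argue by contraposition: given $x \notin S$, closedness of $S$ yields a basic open neighborhood of $x$ disjoint from $S$, i.e.\ a finite $\Omega \subseteq G$ with $\{y : y|_{\Omega} = x|_{\Omega}\} \cap S = \emptyset$. Put $p := x|_{\Omega}$. Then $p$ appears in $x$ (take $g = 1_{G}$), so it suffices to check $p \in P$, i.e.\ that $p$ appears in no element of $S$; but if $(gy)|_{\Omega} = p$ for some $y \in S$ and $g \in G$, then $gy \in S$ by shift-invariance while $gy$ lies in the chosen neighborhood of $x$, contradicting disjointness. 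Hence $x \notin S'$, giving $S = S'$.

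The argument is essentially routine; the only place requiring care is the ``only if'' direction, where shift-invariance of $S$ must be invoked at exactly the step that verifies the pattern extracted from a finite separating window genuinely fails to occur \emph{anywhere} in $S$ (not merely in the untranslated copy of $x$), and where one must use that cylinders form a neighborhood basis so that a single finite window suffices to witness $x \notin S$.
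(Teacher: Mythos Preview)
Your argument is correct and is the standard one: the ``if'' direction writes the complement of $S$ as a union of cylinders, and the ``only if'' direction takes $P$ to be the set of all patterns not occurring in $S$, using closedness to extract a separating finite window and shift-invariance to promote it to a genuinely forbidden pattern. Each step is handled carefully and accurately.

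There is nothing to compare against: the paper states this proposition without proof, treating it as a well-known background fact about shift spaces (it is the standard equivalence used to introduce forbidden patterns; see for instance \cite{CAGroups}). Your write-up would serve perfectly well as the omitted proof.
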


$P$ as above is called a \emph{set of forbidden patterns} for $S$.
If $P$ can be taken to be finite, we call $S$ a \emph{shift of finite type}. 

\begin{remark}
Let $S$ be a SFT and $P$ be a finite set of forbidden patterns defining $S$.
We may assume that $P \subset A^\Omega$ for a fixed finite subset $\Omega \subset G$
by taking
$\Omega \supset \bigcup_{p \in P} \supp(p)$
and extending forbidden patterns in all possible ways.
In fact, if $G$ is finitely generated,
we may assume that $\Omega$ is a ball of radius $n$ with respect to some finite generating set of $G$. 
\end{remark}

Henceforth, we use the acronym SFT
as shorthand for a \emph{nonempty} shift of finite type of some finite alphabet
over a group. 

Different shift spaces over a group $G$ may use use differing alphabets.
Our primary concern is for properties common to all shift spaces over a
fixed group $G$, independent of the choice of finite alphabet.  We begin with
two such invariants.

\begin{definition}
$G$ is \emph{weakly periodic} if every SFT
over $G$ contains a (weakly) periodic configuration.
\end{definition}

\begin{definition}
$G$ is \emph{strongly periodic} if every SFT over $G$ contains a strongly periodic configuration.
\end{definition}
In other words, $G$ is strongly periodic if every SFT over $G$ contains a point with finite
orbit under the $G$-shift action.
Currently the only known strongly periodic groups of which the authors are aware are virtually cyclic groups.

As mentioned in the introduction, the properties which we call weak periodicity and strong periodicity have been established for several well-known classes of groups. We mention that it is not hard to see that any finite group is strongly periodic but not weakly periodic. For infinite groups, strongly periodic implies weakly periodic. 

\subsection{Quasi-isometry and commensurability of groups}

Here we give a very brief introduction to certain geometric notions in group theory, nearly all of which can be found in the textbook ~\cite{GGT}. 

For the remainder of this subsection, let $G$ be a group with a finite generating set $A = \{a_1, a_2, \ldots, a_m \}$, so that any group element in $G$ can be represented by at least one word in $A^{\pm 1}$.

We define the \emph{length} of $g \in G$ to be the length of the shortest word in $A^{\pm 1}$ which represents $g$. The length of $g$ is denoted by $|g|_A$, or $|g|$ if $A$ is understood.  This makes $G$ a metric space with
left-invariant
distance function $d_G^A$ given by $d_G^A(g_1,g_2) = |g_1^{-1}g_2|_A$ for $g_1, g_2 \in G$. This distance is equivalent to the combinatorial distance between $g_1$ and $g_2$ in the right Cayley graph of $G$ with respect to $A$. 

The \emph{growth function} of $G$ (with respect to $A$) is a map $\gamma_G^{A}: \mathbb{N} \rightarrow \mathbb{N}$ which counts the number of elements in a ball of radius $n$ with respect to $d_G^A$, i.e.  \[ \gamma_G^{A}(n) = | \{g \in G \; \mid \; |g| \leq n \} |. \] The growth function $\gamma_G^{A}$ depends on $A$, but this dependence is superficial. Take a partial order $\prec$ on growth functions so that $f \prec g$ if and only if there exists a constant $C$ so that $f(n) \leq g(C n)$ for all $n \in \mathbb{N}$, and define an equivalence relation on this set of functions so that $\simeq$ where $f \simeq g$ if and only if $f \prec g$ and $g \prec f$.  We call an equivalence class under $\simeq$ a \emph{growth type}. It is not hard to see that the growth type of a finitely generated group is an invariant of the group (independent of the choice of finite generating set). 

 A striking connection between the growth of a group and algebraic properties of its finite index subgroups is due to Gromov~\cite{PolynomialGrowth}. Recall that if $P$ is a property of groups, a group $G$ is \textit{virtually P} if $G$ has a finite index subgroup which has the property $P$. If $G$ and $H$ are groups, we say that $G$ is \textit{virtually H} if $G$ has a finite index subgroup isomorphic to $H$.

\begin{theorem}[Gromov]
A finitely generated group has polynomial growth if and only if it is virtually nilpotent. 
\end{theorem}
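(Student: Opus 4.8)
The plan is to prove the two implications separately. The forward direction, that a virtually nilpotent group has polynomial growth, is elementary: since the growth type is independent of the finite generating set, and the same averaging argument shows that a finite-index subgroup has the same growth type as the ambient group, it suffices to bound the growth of a finitely generated nilpotent group $N$. One inducts on the nilpotency class using the lower central series $N = \gamma_1(N) \ge \gamma_2(N) \ge \cdots \ge \gamma_{c+1}(N) = 1$. Choosing generators adapted to this series, every element of the ball of radius $n$ has a normal form $t_1^{e_1} \cdots t_r^{e_r}$ in which a generator of ``weight'' $k$ (one lying in $\gamma_k(N)$ but not $\gamma_{k+1}(N)$) occurs with exponent $|e_i| \le C n^k$; this is because the inclusions $[\gamma_i(N), \gamma_j(N)] \subseteq \gamma_{i+j}(N)$ allow one to move heavy generators to the right at polynomial cost. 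Counting these normal forms gives $\gamma_N(n) \preceq n^d$ for a suitable $d$ (the exact exponent is the Bass--Guivarc'h number, but only the polynomial bound is needed here).

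The converse, that polynomial growth implies virtually nilpotent, is Gromov's theorem proper, and here one uses the asymptotic-cone strategy. Fix a word metric $d$ on $G$ with $\gamma_G(n) \preceq n^d$, a non-principal ultrafilter $\omega$ on $\mathbb{N}$, and a sequence of scales $s_k \to \infty$, and form the \emph{asymptotic cone} $\mathrm{Cone}(G)$ as the ultralimit of the pointed metric spaces $(G, \tfrac{1}{s_k} d, e)$. Polynomial growth is exactly what makes this construction tame: it bounds, uniformly in $k$, the number of $\epsilon$-balls needed to cover an $R$-ball, so the rescaled spaces are uniformly doubling, and the limit $\mathrm{Cone}(G)$ comes out as a complete, locally compact, finite-dimensional, connected, locally connected, homogeneous geodesic metric space (homogeneity because $G$ acts transitively by isometries at every finite scale, a property that survives the limit).

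Next one invokes the solution of Hilbert's fifth problem. The isometry group $L = \mathrm{Isom}(\mathrm{Cone}(G))$ is a locally compact group acting transitively on a finite-dimensional, locally connected space, so by the Montgomery--Zippin--Gleason theory it is a Lie group with finitely many components, and $\mathrm{Cone}(G) = L/K$ for a compact subgroup $K$. The homomorphism $G \to L$ together with the induced action on the Lie algebra of $L$, combined with the fact that polynomial growth forbids free subgroups, lets one extract the decisive reduction (Gromov's lemma, building on Milnor and Wolf): an infinite finitely generated group of polynomial growth has a finite-index subgroup that surjects onto $\Z$. One then closes by induction on the growth degree $d$. If $G$ is finite we are done; otherwise let $G' \le G$ be of finite index with a surjection onto $\Z$ and kernel $H$. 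Polynomial growth forces $H$ to be finitely generated and of strictly smaller growth degree (a Milnor--Wolf type comparison of the growth of $G'$ with that of the extension of $H$ by $\Z$), so by induction $H$ is virtually nilpotent, hence virtually polycyclic; then $G'$, as an extension of a virtually polycyclic group by $\Z$, is itself finitely generated and virtually polycyclic, of polynomial growth, hence virtually nilpotent by Wolf's theorem; and since $G'$ has finite index in $G$, so is $G$.

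The main obstacle is the middle step: proving that polynomial growth forces the asymptotic cone to be finite-dimensional and locally connected, and then running the Montgomery--Zippin machinery to produce a genuine Lie group. This is where essentially all of the content lies, and Gromov's original argument there is delicate. It is worth noting that there are now proofs that sidestep the asymptotic cone, notably Kleiner's, which proceeds via the finite-dimensionality of the space of harmonic functions of bounded growth together with a Poincar\'e-inequality argument, and Ozawa's functional-analytic streamlining of it; but some substitute for the Hilbert's-fifth-problem input still seems unavoidable.
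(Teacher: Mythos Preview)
The paper does not prove this theorem. It is stated with attribution to Gromov and a citation to~\cite{PolynomialGrowth}, and later invoked as a black box in the proof of Corollary~\ref{c:polynomial}; no argument is given or expected in the paper itself.

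Your write-up is a reasonable high-level outline of Gromov's original strategy, together with a mention of the later alternatives due to Kleiner and Ozawa. Two small cautions if you intend this as more than a sketch. First, the phrase ``the homomorphism $G \to L$'' glosses over a genuine subtlety: individual elements of $G$ do not act by isometries on the asymptotic cone, and producing the finite-index subgroup surjecting onto $\Z$ from the Lie structure of $L$ requires further input (Jordan's theorem on finite linear groups, or a Tits-alternative style argument), not just a group homomorphism. Second, in the inductive step the assertions that the kernel $H$ is finitely generated and has strictly smaller polynomial growth degree are themselves nontrivial and deserve at least a reference. None of this affects the present paper, which simply quotes the result.
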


Growth is related to the coarse geometry of a group, as expressed by the notion of \emph{quasi-isometry}. 

\begin{definition} 
Let $X$ and $Y$ be metric spaces. A map $\phi: X \rightarrow Y$ is a \emph{quasi-isometry} if it satisfies the following properties:
\begin{itemize}
\item[(i)] there exist constants $B \geq 1$, $C \geq 0$ such that for all $x_1,x_2 \in X$, \[ \frac{1}{B}d_X(x_1,x_2) - C \leq d_Y(\phi(x_1),\phi(x_2)) \leq B d_X(x_1,x_2) + C. \]
\item[(ii)] there exists $D \geq 0$ such that for any $y \in Y$, there exists $x \in X$ with $d_Y(\phi(x),y) \leq D$. 
\end{itemize}
\end{definition}

Quasi-isometry gives an equivalence relation on metric spaces which captures the large-scale aspects of their geometric structures. Intuitively, two metric spaces are quasi-isometric if they are hard to tell apart from far away. It is not hard to see that all bounded metric spaces are quasi-isometric to one another, and that $\mathbb{Z}^n$ is quasi-isometric to $\mathbb{R}^n$ for any $n \geq 1$. 

Two groups $G_1,G_2$ are said to be \textit{commensurable} if there exists a group $H$ such that $G_1$ and $G_2$ are both virtually $H$. Commensurability is an equivalence relation on the class of all groups. Commensurability implies quasi-isometry, but the converse is not true (see ~\cite[IV.44, IV.47, and IV.48]{GGT}).

\section{Some special SFTs on groups}

In this section we construct some useful shifts of finite type
and apply them to the question of shift periodicity of groups.

\subsection{Higher block shift}\label{HigherBlock} Let $A$ be a finite set. Let $G$ be a group, $H$ a subgroup of finite index,
and $T$ a finite set of elements of $G$ such that $HT = G$.  Set $B = A^T$.
The \emph{higher block map} $\S : A^G \rightarrow B^H$ is defined by
$(\S(x))(h) = (h^{-1} x)\big|_T$.  In other words, $\S(x)$ is the
configuration $z \in B^H$ such that
\begin{equation} \label{hbFormula}
z(h)(t) = x(ht) \text{ for any } h \in H, \; t \in T.
\end{equation}
Notice that \eqref{hbFormula} guarantees $\S$ is injective, as $HT=G$.  However, not all configurations
$z$
in $B^H$ are of the form \eqref{hbFormula} for some $x \in A^G$---one
needs to restrict to those which overlap correctly.

We define a set of \emph{non-overlapping patterns}
$P$ for $B^H$ as follows.  Let $E = H \cap TT^{-1}$ and
\begin{align*}
P = \{ p \in B^E : &\; \text{there exist } h \in E 
\text{ and } t \in T \text{ such that } h^{-1} t \in T \\
&\; \text{and }p(1)(t) \neq p(h)(h^{-1} t)\}.
\end{align*}
The \emph{higher block shift} (over $G$, relative to
$H,T$) is the shift of finite type
$I \subset B^H$ defined by the forbidden patterns $P$.
\begin{proposition} \label{HigherBlockBij}
$\psi_{H,T}$ maps $A^G$ onto $I$ bijectively.
\end{proposition}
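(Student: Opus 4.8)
The plan is to check three things in turn: that $\psi_{H,T}$ is injective, that its image is contained in $I$, and that every element of $I$ lies in its image. Injectivity is already recorded just after \eqref{hbFormula}: since $HT=G$, every $g\in G$ can be written $g=ht$ with $h\in H$ and $t\in T$, and then $x(g)=(\psi_{H,T}(x))(h)(t)$, so $x$ is recovered from $\psi_{H,T}(x)$. Thus the real content is to identify the image with $I$.

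\textbf{Image $\subseteq I$.} Fix $x\in A^G$, set $z=\psi_{H,T}(x)$, and suppose toward a contradiction that some $p\in P$ appears in $z$, say $(h_0 z)\big|_E=p$ with $h_0\in H$. Unwinding the $H$-action on $B^H$ and then \eqref{hbFormula}, one gets $p(e)(t)=z(h_0^{-1}e)(t)=x(h_0^{-1}et)$ for $e\in E$, $t\in T$ (note $h_0^{-1}e\in H$). By the definition of $P$ there are $h\in E$ and $t\in T$ with $h^{-1}t\in T$ and $p(1)(t)\neq p(h)(h^{-1}t)$; but $p(1)(t)=x(h_0^{-1}t)$ while $p(h)(h^{-1}t)=x(h_0^{-1}h\cdot h^{-1}t)=x(h_0^{-1}t)$, a contradiction. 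Hence no forbidden pattern appears in $z$, so $z\in I$.

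\textbf{Surjectivity.} Given $z\in I$, define $x\in A^G$ by choosing, for each $g\in G$, some factorization $g=ht$ with $h\in H$, $t\in T$, and setting $x(g)=z(h)(t)$. The crux is that this is independent of the factorization chosen. Suppose $h_1t_1=h_2t_2$ with $h_i\in H$, $t_i\in T$, and put $h=h_2^{-1}h_1$. Then $h\in H$ and $h=t_2t_1^{-1}\in TT^{-1}$, so $h\in E$; moreover $h^{-1}t_2=h_1^{-1}h_2t_2=h_1^{-1}h_1t_1=t_1\in T$. Let $p\in B^E$ be the pattern $p(e)=z(h_2e)$, which is the restriction to $E$ of $h_2^{-1}\cdot z$ and so appears in $z$; since $z\in I$ we have $p\notin P$, meaning the defining condition of $P$ fails, in particular for our $h\in E$ and $t=t_2\in T$ (which satisfy $h^{-1}t_2=t_1\in T$). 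Therefore $p(1)(t_2)=p(h)(h^{-1}t_2)$; since $p(1)=z(h_2)$ and $p(h)=z(h_2h)=z(h_1)$, this reads $z(h_2)(t_2)=z(h_1)(t_1)$, which is exactly well-definedness. Then $\psi_{H,T}(x)=z$ is immediate from \eqref{hbFormula}: for $h\in H$, $t\in T$, using the factorization $ht$ of $ht$ itself gives $(\psi_{H,T}(x))(h)(t)=x(ht)=z(h)(t)$.

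I expect the main obstacle to be precisely this well-definedness bookkeeping: one must see that the ``non-overlapping'' relations cutting out $P$, phrased in terms of pairs $t,h^{-1}t\in T$ and elements $h\in E=H\cap TT^{-1}$, are exactly what reconciles two different expressions $h_1t_1=h_2t_2$ of the same group element. Once the substitution $h=h_2^{-1}h_1$ is made, everything reduces to the group axioms and the definition of the $G$-action; no topology enters, since closedness and finite type of $I$ are built into its definition and continuity of $\psi_{H,T}$ is not needed for the bijectivity statement.
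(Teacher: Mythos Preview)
Your proof is correct and follows essentially the same approach as the paper's: you verify that images avoid the non-overlapping patterns $P$ (the paper does this directly, you by contradiction) and establish surjectivity by defining $x(ht)=z(h)(t)$ and checking well-definedness via the same computation that two factorizations $h_1t_1=h_2t_2$ yield an element of $E$ to which the overlap constraint applies. The only differences are cosmetic (variable names, direct vs.\ contrapositive phrasing); the substance is identical.
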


\begin{proof}
First we show $z = \S(x) \in I$ for any $x \in A^G$, which amounts to showing that for any $k \in H$,
the restriction $r = (kz)\big|_E$ is not in $P$.  For $h \in E$ and $t \in T$ such that
$h^{-1} t \in T$,
\[
r(1)(t) =(kz(1))(t)= z(k^{-1})(t) = x(k^{-1} h h^{-1} t) = z(k^{-1} h)(h^{-1} t) = r(h)(h^{-1} t).
\]
Hence $r \notin P$.  

It remains to show $\S $ is surjective.
If $z \in I$, define $x \in A^G$ by setting $x(ht) = z(h)(t)$ for $h \in H$, $t \in T$.
To see $x$ is well-defined, suppose
$h_2 \in H$, $t_2 \in T$ are such that $ht = h_2 t_2$.  Then $h^{-1} h_2 = t t_2^{-1} \in E$ and
$h_2^{-1} h t = t_2 \in T$.  Thus the non-overlapping restrictions in $I$ imply that
\[
z(h)(t) = (h^{-1} z)(1)(t) = ((h^{-1} z)(h^{-1} h_2))(h_2^{-1} h t) = z(h_2)(t_2).
\]
We conclude $x$ is well-defined, and it is evident that $\S(x) = z$.
\end{proof}

\begin{remark} \label{StabNest}
Notice that if $z \in I$ and $x = \S^{-1}(z)$, $\St_H(z)$ is a subgroup of $\St_G(x)$.
\end{remark}

\subsection{Products of shifts} \label{s:prod}

In this section we recall some basic results about products of shift spaces.
Let $A$ and $B$ be finite alphabets.  We set $C = A \times B$.
If $c = (a,b) \in A \times B$,
we use the notation $c_1 = a$, $c_2 = b$.

Suppose now that $X$ is an arbitrary set.
Notice that $C^X$
can be identified with $A^X \times B^X$:
if $f: X \rightarrow C$
is a function, we write $f_1 : X \rightarrow C \rightarrow A$ and
$f_2: X \rightarrow C \rightarrow B$ for $f$ composed with the corresponding
projections, and make the identification $f = (f_1, f_2)$.

Some easy consequences of this identification are as follows.
If $G$ is a group and $x \in C^G$, $(gx)_i = g(x_i)$ for $i = 1,2$.
If $\Omega \subset G$, then $(x|_\Omega)_i = (x_i)|_\Omega$ for $i = 1,2$.
If $S_1 \subset A^G$, $S_2 \subset B^G$ are shifts on
$A$ and $B$, respectively, then $S_1 \times S_2$ is a shift on $C$.
Moreover, if $S_1$ is a SFT defined by forbidden patterns $P \subset A^\Omega$,
then the SFT on $C$ defined by forbidden patterns $P \times B^\Omega$
equals $S_1 \times B^G$.  It follows that if $S_1$ and $S_2$ are
SFTs,
then $S_1 \times S_2$ is a SFT:
assuming $S_1$ is defined by forbidden patterns $P \subset A^\Omega$
and $S_2$ by forbidden patterns $Q \subset B^\Omega$,
$S_1 \times S_2$ is the SFT defined by forbidden patterns
\[ P \times B^\Omega \; \cup \; A^\Omega \times Q. \]
This result is given in \cite[Proposition 3.4]{SymbolicHyperbolic} with a different argument.

\subsection{Locked shift} \label{LockedShift}

Suppose $A$ is a finite alphabet,
$G$ is a group, and $N$ is a finitely generated normal subgroup of $G$.
We define $\Fi_{A^G}(N) = \Fi(N) = \{ x \in A^G : nx = x \text{ for all } n \in N\}$
and claim that $\Fi(N)$ is a SFT over $G$.
To see this,
let $\Lambda = \{a_1, \ldots, a_m\}$ be a symmetric
generating set for $N$.  Then $\Fi(N)$ is
the SFT determined by the forbidden patterns
\[
\{p : \{1,a_i\} \rightarrow A : a_i \in \Lambda, p(a_i) \neq p(1) \}.
\]
Now suppose $N$ is also of finite index in $G$.  Let $T = \{t_1,
\ldots, t_n\}$ be a complete set of distinct left coset representatives
for $N$ in $G$ with $t_1 = 1$.  We use $T$ as alphabet and define the
\emph{$N$-locked shift} (over $G$, on $T$) to be $L = \Fi_{T^G}(N) \cap S$,
where $S$ is the SFT defined by forbidden patterns
\[
\{ p : \{1, t\} \rightarrow T: t \in T\setminus\{1\}, p(1) = p(t)\}.
\]

\begin{proposition}
$L$ is a nonempty shift of finite type.
Moreover, for any $x \in L$, $gx = x$ if and only if $g \in N$.
\end{proposition}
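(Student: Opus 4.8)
The plan is to understand the $N$-locked shift $L$ by identifying $\Fi_{T^G}(N)$ with the set of functions on the coset space of $N$ in $G$, and then to read off nonemptiness and the stabilizer statement from elementary properties of the transversal $T$. That $L$ is a shift of finite type is nearly immediate: $L = \Fi_{T^G}(N) \cap S$ is an intersection of two SFTs --- $\Fi_{T^G}(N)$, which is an SFT because $N$ is finitely generated (as shown above), and $S$, which is given by an explicit finite forbidden set --- so $L$ is cut out by the union of the two finite forbidden sets. What genuinely requires an argument is that $L$ is nonempty and that $gx = x$ forces $g \in N$ for every $x \in L$; both follow from a single structural description.

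First I would observe that a configuration $x \in T^G$ lies in $\Fi_{T^G}(N)$ exactly when $x$ is constant on each coset of $N$ --- the left/right distinction being immaterial since $N \trianglelefteq G$ --- so such configurations correspond bijectively to functions $\bar{x}$ from the $n$-element coset space of $N$ in $G$ to the $n$-element alphabet $T$. Then I would check that such an $x$ also lies in $S$ if and only if $\bar{x}$ is injective, hence bijective. This rests on two facts about the transversal: since $t_1 = 1$, for $h \in G$ and $t \in T$ the elements $h$ and $ht$ lie in distinct cosets of $N$ precisely when $t \neq 1$; and conversely, because $T$ is a complete transversal, every ordered pair of distinct cosets of $N$ arises as $(hN, htN)$ for some $h \in G$ and $t \in T \setminus \{1\}$. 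Granting these, the forbidden patterns defining $S$ say precisely that $\bar{x}$ never repeats a value on two distinct cosets. Hence $L$ is in bijection with the set of bijections from the coset space onto $T$, which has $n!$ elements; in particular $L \neq \emptyset$.

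For the stabilizer statement, the inclusion $N \subseteq \St_G(x)$ holds for every $x \in \Fi_{T^G}(N) \supseteq L$ by the definition of $\Fi$. Conversely, suppose $x \in L$ and $gx = x$. Writing $x$ through its associated bijection $\bar{x}$, the identity $gx = x$ forces $\bar{x}$ to agree on the cosets of $h$ and of $g^{-1}h$ for all $h \in G$; injectivity of $\bar{x}$ then makes those cosets equal for all $h$, and the choice $h = 1$ yields $g^{-1} \in N$, whence $g \in N$. I do not expect a serious obstacle here: the only step deserving care is the translation of $S$'s forbidden patterns into injectivity of $\bar{x}$, i.e.\ the two transversal facts above, both of which are routine consequences of $T$ being a complete set of coset representatives together with the normality of $N$ (which lets us pass freely between left and right cosets).
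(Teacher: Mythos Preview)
Your argument is correct. The route differs from the paper's in emphasis: the paper simply exhibits one explicit configuration $y\in L$ (sending $tn\mapsto t$) and checks by hand that $y\in\Fi(N)$ and $y\in S$, then for the stabilizer writes $g=tn$, uses $nx=x$ to reduce to $tx=x$, and reads off $x(1)=x(t)$ directly from the definition of $S$ at the single window $\{1,t\}$ to force $t=1$. You instead give a structural description of \emph{all} of $L$, identifying $\Fi_{T^G}(N)$ with functions on $G/N$ and then $L$ with the $n!$ bijections $G/N\to T$; nonemptiness and the stabilizer statement drop out of injectivity of $\bar{x}$. Your approach yields extra information (the exact size of $L$) and a slightly cleaner stabilizer argument, at the cost of verifying the two transversal facts you flag; the paper's approach is more economical, needing only one witness and one evaluation of a forbidden pattern. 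Both are short and rest on the same underlying observation, namely that elements of $L$ encode labelings of the cosets of $N$ by $T$ that distinguish distinct cosets.
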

\begin{proof}
First, we show $L$ is nonempty.
Let $y \in T^G$ be the configuration sending each $g \in G$ to its coset
representative: $y(tn) = t$ whenever $t \in T$, $n \in N$.
Then if $n, n' \in N$, $t \in T$,
\[
ny(tn') = y(n^{-1} t n') = y(t (t^{-1} n^{-1} t) n') = t = y(tn'),
\]
so $ny = y$ and we conclude $y \in \Fi(N)$.  Moreover, $y \in S$,
for if $g = tn \in G$,
\[ gy(1) = ty(1) = y(t^{-1}) \neq y(t^{-1} t_2) = gy(t_2) \]
whenever $t_2 \in T \setminus \{1\}$.

For the second assertion, suppose $x \in L$ and  $g = tn \in G$
satisfies $gx = x$.  Then
\[ tx = x, \quad \text{so} \quad x(1) = tx(t) = x(t). \]
By the restrictions on $S$, $t = 1$ and $g \in N$.
\end{proof}

\section{Commensurability and periodicity}
Using the basic constructions above, we show the properties
of weak and strong periodicity are preserved under finite index extensions
and, in the case when the group is finitely generated, preserved in
finite index subgroups.
As a corollary, we find that weak and strong periodicity are
commensurability invariants.

\begin{proposition}\label{virtual}
Let $G$ be a group.
\begin{enumerate}
\item \label{vrp} If $G$ is virtually weakly periodic, then $G$ is weakly periodic.
\item \label{vsp} If $G$ is virtually strongly periodic, then $G$ is strongly periodic.
\end{enumerate}
\end{proposition}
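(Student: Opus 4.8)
The plan is to reduce both statements to a single core fact: if $S\subseteq A^G$ is an SFT and $H\leq G$ is a finite-index subgroup which is weakly (resp. strongly) periodic, then $S$ contains a weakly (resp. strongly) periodic configuration. Granting this, \eqref{vrp} and \eqref{vsp} are immediate, since ``virtually weakly (strongly) periodic'' means exactly that such an $H$ exists, and the conclusion is that every SFT over $G$ contains a configuration of the required kind.

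To prove the core fact I would transport $S$ across the higher block map. Since $[G:H]<\infty$, choose a finite $T\subseteq G$ which is a complete set of representatives for the right cosets of $H$, so that $G=HT$ and every $g\in G$ is $ht$ for unique $h\in H$, $t\in T$; put $B=A^T$ and form $\psi_{H,T}\colon A^G\to B^H$, which by Proposition~\ref{HigherBlockBij} restricts to a bijection of $A^G$ onto the higher block SFT $I\subseteq B^H$. The key step is to verify that $\psi_{H,T}(S)$ is an SFT over $H$. Write $S$ as the shift defined by a finite set of forbidden patterns $P\subseteq A^\Omega$, $\Omega\subseteq G$ finite. For $x\in A^G$ and $z=\psi_{H,T}(x)$ one has $x(ht)=z(h)(t)$ for $h\in H$, $t\in T$; thus for any $g\in G$ and $\omega\in\Omega$, writing $g\omega=ht$ expresses $x(g\omega)$ as a single coordinate of $z(h)$. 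Fixing a base point and using that $\Omega$ and $T$ are finite, all the elements $h\in H$ arising this way lie in a single $H$-translate of a fixed finite set $F\subseteq H$ depending only on $\Omega$ and $T$. Consequently the condition ``$x\in S$'' translates into the condition that $z$ avoid a finite set of patterns $P'\subseteq B^F$, obtained by rewriting each $p\in P$ (and each choice of coset representative) through $x(ht)=z(h)(t)$. Hence $\psi_{H,T}(S)=I\cap S'$, where $S'$ is the shift over $H$ defined by $P'$, and an intersection of two SFTs over $H$ is again an SFT over $H$ (extend both pattern sets to a common finite support in $H$ and take the union, as in Section~\ref{s:prod}). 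Since $S\neq\emptyset$ we get $\psi_{H,T}(S)\neq\emptyset$, so it is a genuine SFT.

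Now apply the hypothesis on $H$. If $H$ is weakly periodic, $\psi_{H,T}(S)$ contains a weakly periodic $z$, i.e.\ $\St_H(z)\neq\{1\}$; setting $x=\psi_{H,T}^{-1}(z)\in S$, the nesting of stabilizers under the higher block map (Remark~\ref{StabNest}) gives $\St_H(z)\leq\St_G(x)$, so $x$ is a weakly periodic configuration of $S$. If instead $H$ is strongly periodic, choose $z$ with $\St_H(z)$ of finite index in $H$; since also $[G:H]<\infty$ we get $[G:\St_H(z)]<\infty$, hence $[G:\St_G(x)]<\infty$, so $x$ has finite $G$-orbit. In either case every SFT over $G$ has a configuration of the required type, establishing both parts.

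The step I expect to be the main obstacle is the verification that $\psi_{H,T}(S)$ is of finite type: one must check carefully that the window $\Omega\subseteq G$ defining $S$ pulls back, under the local correspondence $x(ht)=z(h)(t)$, to a genuinely finite window $F\subseteq H$, so that only finitely many forbidden patterns over $H$ are needed. Once this bookkeeping is in place, the rest is a direct application of Proposition~\ref{HigherBlockBij} and Remark~\ref{StabNest}.
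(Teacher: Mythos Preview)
Your proposal is correct and follows essentially the same approach as the paper: transport the SFT $S$ over $G$ to an SFT over $H$ via the higher block map, apply the periodicity hypothesis on $H$, and pull back using the stabilizer nesting of Remark~\ref{StabNest}. The only difference is bookkeeping: the paper enlarges the transversal to $T = T'\Omega$ (where $T'$ is a set of coset representatives), which forces the auxiliary SFT $J$ to have forbidden patterns supported on the single point $\{1_H\}$, whereas you keep $T$ minimal and argue for a finite window $F\subseteq H$; both accomplish the same thing.
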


\begin{proof}
Suppose $H$ is a weakly periodic subgroup of $G$ with $[G : H] = n$ and
$S \subset A^G$ is a nonempty shift of finite type over $G$.  Let $P'$ be a finite set
of forbidden patterns for $S$; by extending patterns if necessary, we may assume
$P' \subset A^\Omega$ for some fixed finite subset $\Omega \subset G$ with $1 \in \Omega$.
Let $T' = \{a_1,
\ldots, a_n\}$ be a set of distinct right coset representatives for $H$ in $G$,
where $a_1 = 1$,
and set $T = T' \Omega$.  Define the higher block map
$\S: A^G \rightarrow I \subset B^H$ as in Section \ref{HigherBlock}, and
let $J \subset B^H$ be the shift of finite type defined by
the following set of forbidden patterns:
\begin{align*}
\{ p :  \{1_H\} \rightarrow B : &\text{there is } p' \in P', t \in T'
\text{ such that }\\
&(p(1_H))(t \omega) = p'(\omega) \text{ for all }
\omega \in \Omega \}.
\end{align*}
Then $I \cap J$ is a shift of finite type and $S = \S^{-1}(I \cap J)$.
Since $S$ is nonempty, $I \cap J$ is nonempty and contains a periodic
configuration $z \in B^H$ by hypothesis; 
i.e. $\St_H(z)$ is nontrivial.
Let $x = \S^{-1}(z) \in S$.
By the remark preceding Section \ref{s:prod}, $\St_G(x)$ is nontrivial, proving assertion (\ref{vrp}).
If in addition $z$ can be chosen so that $\St_H(z)$ is of finite index in $H$,
then $\St_G(x)$ is of finite index in $G$, giving assertion (\ref{vsp}).

\end{proof}

\begin{proposition} \label{hereditary}
Let $G$ be a finitely generated group and $H$ a finite index subgroup of $G$.
\begin{enumerate}
\item If $G$ is weakly periodic, then $H$ is weakly periodic.
\item If $G$ is strongly periodic, then $H$ is strongly periodic.
\end{enumerate}
\end{proposition}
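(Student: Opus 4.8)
The plan is to reverse the perspective of Proposition~\ref{virtual}: there we built a SFT over $H$ from one over $G$ using the higher block shift; now, given a SFT $S \subset A^H$ over the finite index subgroup $H$, I want to manufacture a SFT over $G$ whose periodic points, when restricted to $H$, record periodic points of $S$. The natural device is the \emph{induced shift}. Write $G = \bigsqcup_{t \in T} tH$ for a finite left transversal $T$ with $1 \in T$, and consider configurations over $G$ with alphabet $A$ that are ``$H$-coherent'' in the sense that the pattern over each coset $tH$ is a translate (by $t$) of a legal point of $S$; additionally impose the $N$-locked-shift-style bookkeeping from Section~\ref{LockedShift} using a second coordinate with alphabet $T$ to keep track of which coset a given group element lies in. Both constraints are local because $S$ is of finite type and $H$ has finite index (so membership in a coset is detectable by looking in a fixed finite neighborhood). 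Taking the product of these two SFTs (Section~\ref{s:prod}) yields a SFT $\widetilde S$ over $G$.

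The key steps, in order: (1) Verify $\widetilde S$ is a nonempty SFT over $G$ — nonemptiness comes from taking any $y \in S$, spreading it over the coset $H$, and choosing \emph{independent} legal configurations of $S$ (after the appropriate translation) over the other cosets; finiteness of the forbidden pattern set follows from finite type of $S$ together with finite index of $H$, using the ball-of-radius-$n$ remark after the definition of SFT. (2) Apply the hypothesis that $G$ is weakly (resp.\ strongly) periodic to get a periodic $\tilde x \in \widetilde S$ with $\Stab_G(\tilde x)$ nontrivial (resp.\ finite index). (3) Extract from $\tilde x$ a periodic point of $S$: restrict $\tilde x$ to the coset $H$ and pull back by the identification $H \hookrightarrow G$ to obtain $x \in S$; the content is to show $\Stab_H(\tilde x) \subset \Stab_H(x)$ (immediate, since the restriction map to $H$ is $H$-equivariant) and, crucially, that $\Stab_G(\tilde x) \cap H$ is still large. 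For (3) the locked coordinate is what does the work: since the $T$-coordinate of $\tilde x$ is forced to be the coset-indicator configuration (as in the $N$-locked shift, but here with the roles of normal/finite-index adjusted — I may need to pass to the normal core $\bigcap_{g} gHg^{-1}$, which is still finite index, to make the ``locked'' argument literally go through), any $g \in \Stab_G(\tilde x)$ must preserve cosets of that core, hence lies in a fixed finite-index subgroup; intersecting with $H$ then shows $\Stab_H(\tilde x)$ has finite index in $H$ whenever $\Stab_G(\tilde x)$ has finite index in $G$, and is nontrivial on a finite-index-in-$G$ (hence infinite, in the infinite case) subgroup of translations in the weak case — with a small extra argument to guarantee the resulting stabilizer in $H$ is genuinely nontrivial rather than accidentally trivial.

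The main obstacle I anticipate is exactly this last point in the weakly periodic case: a nontrivial $g \in \Stab_G(\tilde x)$ need not lie in $H$, so I cannot directly conclude $\Stab_H(x)$ is nontrivial. The fix is to engineer $\widetilde S$ so that its symmetries are constrained to the core $K = \bigcap_{t \in T} tHt^{-1} \trianglelefteq G$: by making the coset-label coordinate a $K$-locked shift (legitimate since $K$ is finitely generated, being finite index in the finitely generated $G$, and normal), every symmetry of $\tilde x$ lies in $K \leq H$, so a nontrivial $G$-symmetry is automatically a nontrivial $H$-symmetry; and if $\Stab_G(\tilde x)$ has finite index in $G$ then $\Stab_G(\tilde x) = \Stab_K(\tilde x)$ has finite index in $K$, hence in $H$. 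The remaining routine checks — that the ``coset pattern is a translate of a point of $S$'' constraint is genuinely of finite type over $G$, and that independence across cosets gives nonemptiness — I would dispatch quickly using the product-of-SFTs machinery of Section~\ref{s:prod} and the higher block shift of Section~\ref{HigherBlock} applied to the pair $(G, K)$.
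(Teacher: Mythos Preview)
Your proposal is correct and is essentially the paper's proof: extend $S$ to a SFT $S'$ over $G$ using the same forbidden pattern set $P \subset A^{\Omega}$ (now with $\Omega$ regarded as a subset of $G$), take the product with a locked shift to force stabilizers into a normal finite-index subgroup of $G$ contained in $H$, apply the hypothesis on $G$, and restrict the first coordinate to $H$. The paper streamlines your argument by first reducing, via Proposition~\ref{virtual} applied to the core $K \leq H$, to the case $H \trianglelefteq G$, so that the $H$-locked shift can be used directly; note also that your ``$H$-coherent'' constraint, once unwound, is exactly the condition that no pattern from $P$ appears in $y \in A^G$, so no separate coset-by-coset bookkeeping or higher block machinery is needed.
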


\begin{proof}
Since $H$ contains a finite index subgroup that is normal in $G$,
by Proposition \ref{virtual} we may assume without loss of generality that
$H$ is normal in $G$.
Let $T$ be a complete set of left coset representatives for $H$ in $G$ with
$1 \in T$.

Suppose $A$ is a finite alphabet and $S \subset A^H$ is a SFT
over $H$ defined by forbidden patterns $P \subset A^\Omega$,
where $\Omega \subset H$ is finite.  By regarding $\Omega$ as a subset of $G$,
we can consider the SFT $S' \subset A^G$ with the same forbidden pattern set $P$.
Notice that $S'$ is nonempty, for we can choose $x \in S$
and define $x' \in S'$ by letting $x'(th) = x(h)$ for $t \in T$, $h \in H$.
Notice also that if $y \in S'$, $y|_H \in S$.

Define $L$ to be the $H$-locked shift as in section \ref{LockedShift}.
Then $S' \times L$ is a SFT over $G$.  Moreover,
whenever $y \in S' \times L$, $\St_G(y) \subset H$, for $gy = y$
implies in particular that $gy_2 = y_2$.  Regarding $x = (y_1)|_H$
as a configuration in $S \subset A^H$, it follows that $\St_H(x)$ contains
$\St_G(y)$.  In conclusion, $H$ is weakly (strongly) periodic whenever $G$ is.
\end{proof}

\begin{theorem}
Let $G_1$ and $G_2$ be finitely generated
commensurable groups.  If $G_1$ is weakly (strongly) periodic,
then $G_2$ is weakly (strongly) periodic.
\end{theorem}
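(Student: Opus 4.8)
The plan is to deduce this statement immediately by chaining together Propositions \ref{virtual} and \ref{hereditary}, since all the substantive work has already been done there. First I would unpack the definition of commensurability: there is a group $H$ together with finite index subgroups $H_1 \leq G_1$ and $H_2 \leq G_2$ and isomorphisms $H_1 \cong H \cong H_2$. The only preliminary observation needed is that a finite index subgroup of a finitely generated group is itself finitely generated, so that $H_1$ (and $H_2$) are finitely generated; this is standard (e.g. via the Reidemeister--Schreier process or a Schreier transversal argument) and I would just cite it.

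Next I would run the argument in three steps. Step one: since $G_1$ is finitely generated and weakly (strongly) periodic and $H_1$ has finite index in $G_1$, Proposition \ref{hereditary} gives that $H_1$ is weakly (strongly) periodic. Step two: weak periodicity and strong periodicity are manifestly invariants of group isomorphism — an isomorphism $H_1 \to H_2$ carries alphabets, SFTs, configurations, stabilizers, and finite orbits to their counterparts — so $H_2$ is weakly (strongly) periodic. Step three: $H_2$ is a finite index subgroup of $G_2$, so $G_2$ is virtually weakly (strongly) periodic, and Proposition \ref{virtual} then yields that $G_2$ is weakly (strongly) periodic. This completes the proof; I would remark that the ``weakly'' and ``strongly'' cases are handled in perfect parallel, using part (1) versus part (2) of each proposition.

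There is essentially no obstacle here: the theorem is a formal consequence of the two propositions, which together say that each of weak and strong periodicity passes \emph{down} to finite index subgroups (for finitely generated groups) and passes \emph{up} from finite index subgroups (no hypothesis needed). The only points requiring a word of justification are the finite generation of $H_1$ and $H_2$ and the isomorphism-invariance of the two properties, both of which are routine. If I wanted to be slightly more careful I would note that one could alternatively bypass the named common group $H$ and argue directly with a common isomorphism class of finite index subgroups, but this changes nothing of substance.

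\begin{proof}
Since $G_1$ and $G_2$ are commensurable, there is a group $H$ with finite index subgroups $H_1 \leq G_1$, $H_2 \leq G_2$ and isomorphisms $H_1 \cong H \cong H_2$. As $G_1$ is finitely generated and $[G_1 : H_1] < \infty$, the subgroup $H_1$ is finitely generated; likewise $H_2$ is finitely generated.

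Suppose $G_1$ is weakly (respectively strongly) periodic. By Proposition \ref{hereditary}, applied to the finitely generated group $G_1$ and its finite index subgroup $H_1$, the group $H_1$ is weakly (respectively strongly) periodic. Both properties are preserved under group isomorphism: an isomorphism $H_1 \to H_2$ induces a bijection of alphabets-and-SFTs that carries stabilizer subgroups to stabilizer subgroups and finite orbits to finite orbits. Hence $H_2$ is weakly (respectively strongly) periodic.

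Finally, $H_2$ is a finite index subgroup of $G_2$, so $G_2$ is virtually weakly (respectively strongly) periodic. By Proposition \ref{virtual}, $G_2$ is weakly (respectively strongly) periodic.
\end{proof}
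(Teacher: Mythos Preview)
Your proposal is correct and matches the paper's approach exactly: the paper states the theorem immediately after Propositions~\ref{virtual} and~\ref{hereditary} without further proof, leaving it as the evident formal consequence of chaining ``down then up'' through a common finite index subgroup. Your added remarks on finite generation of $H_1,H_2$ and isomorphism-invariance are the routine checks needed to make that deduction explicit.
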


\begin{remark}
Emmanuel Jeandel has pointed out to us that the proof of Proposition \ref{hereditary} can be extended
to show that strong periodicity is in fact a hereditary property for all finitely generated
subgroups.  That is, if $G$ is a strongly periodic group and $H$ is a finitely generated
subgroup of $G$, possibly of infinite index, then $H$ is strongly periodic.
However, the property of having a strongly aperiodic SFT is not a hereditary property, as
evidenced by $\Z \leq \Z^2$.
\end{remark}

\subsection{SFTs and quotient groups}

Suppose $G, Q$ are groups and $f: G \rightarrow Q$ is a surjective homomorphism.
Let $N$ be the kernel of $f$ so that $Q = G/N$.   Notice that $f$ induces a bijection
$F: A^Q \rightarrow \Fi_{A^G}(N)$ defined by $F(x) = x \circ f$.  Moreover,
$F$ is $G/N$-equivariant in the following sense: if $q = gN \in Q$
and $x \in A^Q$, then $F(qx) = gF(x)$.  (This correspondence is explained in \cite[Section 1.3]{CAGroups}.)

Now, if $\ov{S} \subset A^Q$
is a SFT defined by forbidden patterns $\ov{P} \subset A^{\Oom}$, $\Oom \subset Q$ finite,
we can choose a (finite) set $\Om \subset G$ such that $f$ maps $\Om$ onto $\Oom$ bijectively.
$f|_\Om$ induces a bijection $g: A^{\Oom} \rightarrow A^{\Om}$. We let $P = g(\ov{P})$ and
$S \subset A^G$ be the SFT with forbidden patterns $P$.

\begin{proposition}\label{t:quotient-strongly}
$F(\ov{S}) = S \cap \Fi (N)$.
\end{proposition}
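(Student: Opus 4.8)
The plan is to leverage the bijectivity of $F \colon A^Q \to \Fi(N)$ recalled just before the statement. Since $F$ maps $A^Q$ onto $\Fi(N)$, the inclusion $F(\overline{S}) \subseteq \Fi(N)$ is automatic, so the entire content of the proposition collapses to the single equivalence
\[
x \in \overline{S} \iff F(x) \in S \qquad (x \in A^Q).
\]
Granting this, one immediately gets $F(\overline{S}) = \{F(x) : x \in A^Q,\ F(x) \in S\} = F(A^Q) \cap S = \Fi(N) \cap S$, which is exactly the claim (the $\subseteq$ direction uses that $F(x) \in S$ whenever $x \in \overline S$, the $\supseteq$ direction writes each $y \in \Fi(N)$ uniquely as $F(x)$ and uses the reverse implication).

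To prove the displayed equivalence I would argue pattern by pattern. By definition $x \notin \overline{S}$ iff some $\overline{p} \in \overline{P}$ appears in $x$, and $F(x) \notin S$ iff some $p \in P = g(\overline{P})$ appears in $F(x)$; since $g$ is a bijection carrying $\overline{P}$ onto $P$ with no collisions, it suffices to show that for each individual $\overline{p} \in A^{\overline{\Omega}}$,
\[
\overline{p} \text{ appears in } x \iff g(\overline{p}) \text{ appears in } F(x).
\]
Here I would combine the $G/N$-equivariance of $F$ with $F(x) = x \circ f$: for $h \in G$, writing $q = f(h) \in Q$, we have $h \cdot F(x) = F(q \cdot x) = (q \cdot x) \circ f$, hence $(h \cdot F(x))(\omega) = (q \cdot x)(f(\omega))$ for $\omega \in \Omega$, while $g(\overline{p})(\omega) = \overline{p}(f(\omega))$ since $g$ is precomposition with $f|_\Omega$. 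Because $f$ restricts to a bijection $\Omega \to \overline{\Omega}$, this yields $(h \cdot F(x))|_\Omega = g(\overline{p})$ if and only if $(q \cdot x)|_{\overline{\Omega}} = \overline{p}$. Letting $h$ range over $G$ and using surjectivity of $f$, so that $q = f(h)$ ranges over all of $Q$, we conclude that $g(\overline{p})$ appears in $F(x)$ precisely when $\overline{p}$ appears in $x$. Taking the union over $\overline{p} \in \overline{P}$ gives the equivalence, and with it the proposition.

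I do not anticipate a genuine obstacle: once the bijections $F$ and $g$ are in hand, the proof is essentially an unwinding of definitions. The only points that demand care are bookkeeping ones — keeping the $G$-action on $A^G$ distinct from the $Q$-action on $A^Q$, remembering that $g$ is precomposition with the bijection $f|_\Omega \colon \Omega \to \overline{\Omega}$ (so that $P$ contains no patterns beyond the images of those in $\overline{P}$), and invoking surjectivity of $f$ at the final step to turn ``for some $h \in G$'' into ``for some $q \in Q$.'' It is also worth noting in passing that nothing in this argument uses that $N$ is finitely generated, finite, or of finite index; it uses only that $f$ is surjective and $\overline{\Omega}$ is finite, which is what makes a suitable $\Omega \subset G$ available in the first place.
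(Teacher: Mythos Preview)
Your proposal is correct and follows essentially the same approach as the paper: both reduce the equality to the equivalence ``$\overline{p}$ appears in $x$ iff $g(\overline{p})$ appears in $F(x)$'' for each forbidden pattern, and then invoke that $F$ is a bijection onto $\Fi(N)$. The paper's proof is a two-line sketch of exactly this argument, while yours fills in the computations (equivariance, the formula for $g$, surjectivity of $f$) that the paper leaves implicit.
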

\begin{proof}
Suppose $x \in A^Q$.
If $\ov{p} \in \ov{P}$ and $p = g(\ov{p}) \in P$, then $\ov{p}$ appears in $x$
if and only if $p$ appears in $F(x)$.  Since $F$ is a bijection from $A^Q$ to $\Fi(N)$,
the result follows.
\end{proof}
It follows from Section \ref{LockedShift} that
$F(\ov{S})$ is a SFT in $A^G$ whenever $N$ is finitely generated.
This leads to a useful general result.
\begin{theorem}
Suppose $\displaystyle{1 \rightarrow N \rightarrow G \rightarrow Q \rightarrow 1}$ is
a short exact sequence of groups and $N$ is finitely generated.  If $G$ is strongly
periodic, then $Q$ is strongly periodic.
\end{theorem}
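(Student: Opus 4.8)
The plan is to reduce strong periodicity of $Q$ to that of $G$ through the correspondence $F$ developed above. Let $\ov{S} \subset A^Q$ be an arbitrary SFT, and construct the SFT $S \subset A^G$ exactly as in the paragraph preceding Proposition~\ref{t:quotient-strongly}. By that proposition, $F(\ov{S}) = S \cap \Fi(N)$. Since $N$ is finitely generated, $\Fi(N)$ is a SFT over $G$ by the discussion in Section~\ref{LockedShift}, and an intersection of two SFTs is again a SFT --- one simply takes the union of the two finite forbidden-pattern sets (extending to a common support if one prefers). Because $\ov{S}$ is nonempty and $F$ is a bijection from $A^Q$ onto $\Fi(N)$, the set $S \cap \Fi(N)$ is a nonempty SFT over $G$.

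Next I would invoke the hypothesis that $G$ is strongly periodic: $S \cap \Fi(N)$ contains a configuration $y$ whose stabilizer $\St_G(y)$ has finite index in $G$. Put $\ov{x} = F^{-1}(y) \in \ov{S}$; it remains to check that $\St_Q(\ov{x})$ has finite index in $Q$. For this I would use the $G/N$-equivariance of $F$: for $q = gN \in Q$ we have $F(q\ov{x}) = g\,F(\ov{x}) = gy$, so by injectivity of $F$, $q\ov{x} = \ov{x}$ if and only if $gy = y$. Since $y \in \Fi(N)$ we have $N \subseteq \St_G(y)$, so the condition $gy = y$ does not depend on the choice of lift $g$ of $q$, and therefore $\St_Q(\ov{x}) = f(\St_G(y))$, which under the identification $Q = G/N$ is exactly $\St_G(y)/N$. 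Consequently
\[
[\,Q : \St_Q(\ov{x})\,] = [\,G/N : \St_G(y)/N\,] = [\,G : \St_G(y)\,] < \infty ,
\]
so $\ov{x}$ is a strongly periodic configuration in $\ov{S}$. As $\ov{S}$ was an arbitrary SFT over $Q$, this shows $Q$ is strongly periodic.

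The constructions involved --- building $S$ and recognizing $S \cap \Fi(N)$ as a nonempty SFT over $G$ --- are routine given the earlier sections. The one point requiring a little care is the stabilizer computation, where it is essential both that $N$ be finitely generated (so that $\Fi(N)$ is genuinely of finite type, hence so is $S \cap \Fi(N)$) and that $N \subseteq \St_G(y)$ (so that passing to the quotient $Q = G/N$ preserves the finite-index property). Beyond bookkeeping around lifts in the equivariance step, I do not anticipate a serious obstacle.
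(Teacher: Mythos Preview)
Your proposal is correct and follows essentially the same approach as the paper's proof: transfer the SFT $\ov{S}$ to the nonempty SFT $F(\ov{S}) = S \cap \Fi(N)$ over $G$, apply strong periodicity of $G$ to obtain a finite-index stabilizer, and use the $G/N$-equivariance of $F$ together with $N \subseteq \St_G(y)$ to push the finite-index property down to $Q$. Your version is somewhat more explicit about the index bookkeeping and about why $S \cap \Fi(N)$ is a SFT, but the argument is the same.
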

\begin{proof}

Suppose $\ov{S} \subset A^Q$ is a SFT over $Q$.  Defining $F$ as above, by hypothesis
there is a configuration $F(x) \in F(\ov{S}) \subset A^G$ such that $H = \St_G(F(x))$
is of finite index in $G$.  Of course, $N \subset H$, so $H/N$ is of finite index in
$G/N = Q$.  Moreover, by equivariance of $F$, $qx = x$ for every $q \in H/N$.
In conclusion, $x$ has finite index stabilizer in $Q$.
\end{proof}

As an example application,
this result places restrictions on the SFTs which can be defined on groups of polynomial growth.

\begin{cor}\label{c:polynomial}
If $G$ is a finitely generated group of polynomial growth $\gamma(n) \sim n^d$, where $d \geq 2$,
then $G$ is not strongly periodic.
\end{cor}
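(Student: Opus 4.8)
The plan is to reduce the statement, via Gromov's theorem together with the results already obtained, to the known fact that $\Z^2$ is not strongly periodic. First I would apply Gromov's theorem to obtain a finite-index subgroup $H \leq G$ that is nilpotent. Since finite-index subgroups of a finitely generated group are again finitely generated, and since commensurable groups share the same growth type, $H$ is a finitely generated nilpotent group of polynomial growth of degree $d \geq 2$. By Proposition \ref{hereditary}, strong periodicity of $G$ would pass to $H$, so it suffices to prove that $H$ is not strongly periodic.

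Next I would produce a surjection $f : H \twoheadrightarrow \Z^2$ with finitely generated kernel. Because $d \geq 2$, the group $H$ is not virtually cyclic. On the other hand, a finitely generated nilpotent group whose abelianization $H^{\mathrm{ab}}$ has torsion-free rank at most $1$ is virtually cyclic: if the rank is $0$ then $H^{\mathrm{ab}}$, hence $H$, is finite; and if the rank is $1$ then, since the commutator map factors through $\bigwedge^2 H^{\mathrm{ab}}$, which is finite, each lower central quotient $\gamma_i(H)/\gamma_{i+1}(H)$ with $i \geq 2$ is finite, so $[H,H]$ is finite and $H$ is virtually $\Z$. (Equivalently, one may quote the Bass--Guivarc'h formula for the growth degree of a nilpotent group in terms of the ranks of its lower central quotients.) Hence $H^{\mathrm{ab}}$ has torsion-free rank at least $2$ and admits $\Z^2$ as a quotient, yielding $f : H \twoheadrightarrow \Z^2$. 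Since finitely generated nilpotent groups are polycyclic and therefore satisfy the maximal condition on subgroups, $N := \ker f$ is finitely generated, and we obtain a short exact sequence $1 \to N \to H \to \Z^2 \to 1$ with $N$ finitely generated.

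Now I would invoke Theorem \ref{t:extension}: were $H$ strongly periodic, $\Z^2$ would be strongly periodic as well. But $\Z^2$ is not strongly periodic---Berger's aperiodic set of Wang tiles (discussed in the introduction) gives an SFT over $\Z^2$ with no periodic configuration whatsoever, in particular none with finite orbit. This contradiction shows $H$, and hence $G$, is not strongly periodic. The only step that is not a purely formal application of earlier results is the assertion that a finitely generated nilpotent group of growth degree at least $2$ surjects onto $\Z^2$; I expect the main (and quite minor) obstacle to be formulating this standard fact about nilpotent groups cleanly with an appropriate reference, rather than any substantive difficulty.
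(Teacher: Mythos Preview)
Your proposal is correct and follows essentially the same route as the paper: pass to a finite-index nilpotent subgroup via Gromov and Proposition~\ref{hereditary}, surject onto $\Z^2$ with finitely generated kernel, and apply Theorem~\ref{t:extension} together with the non-strong-periodicity of $\Z^2$. The only difference is that where the paper cites \cite[Lemma~13]{GeodesicGrowthPolynomial} for the surjection onto $\Z^2$, you supply a self-contained argument via the rank of the abelianization.
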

\begin{proof}
By Gromov's theorem, $G$ is virtually nilpotent, so by Proposition \ref{hereditary},
we may assume $G$ is nilpotent. Since $G$ is not virtually cyclic,
  there exists a surjective homomorphism
$f: G \rightarrow \Z^2$ (\cite[Lemma 13]{GeodesicGrowthPolynomial}) and as every subgroup of a finitely generated
nilpotent group is itself finitely generated, $N = \text{ker } f$ is finitely
generated.  Since $\Z^2$ is not strongly periodic, $G$ is not strongly periodic.
\end{proof}

Corollary \ref{c:polynomial} does not rule out the existence of periodic points for SFTs defined on groups of polynomial growth. However, the examples we know of  groups
with strongly aperiodic SFTs
 (such as $\mathbb{Z}^n$ and the discrete Heiseinberg group) are all groups of polynomial growth. This leads to the following question.

\begin{question}
Is there a group of non-linear polynomial growth on which every shift of finite type has a periodic point? 
\end{question}

We also make the following conjecture.

\begin{conjecture}
A group is strongly periodic if and only if it is virtually cyclic.
\end{conjecture}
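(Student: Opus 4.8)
The plan is to prove the two directions separately; the forward implication follows from the tools already developed in this paper, while the converse is the genuinely open part. For the forward direction, suppose $G$ is virtually cyclic. If $G$ is finite then $A^G$ is finite for every alphabet $A$, so every SFT over $G$ is finite and every configuration has finite orbit; hence $G$ is strongly periodic. Otherwise $G$ is virtually $\Z$, and since $\Z$ is strongly periodic (every two-sided SFT contains a bi-infinite periodic sequence, which has finite orbit), Proposition \ref{virtual}(\ref{vsp}) gives that $G$ is strongly periodic.

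For the converse I would argue by contraposition: show that every group that is not virtually cyclic admits a weakly aperiodic SFT, i.e.\ an SFT with no finite-orbit configuration. The first step is to reduce to the finitely generated case. By the hereditary property recorded in the remark following Proposition \ref{hereditary} (attributed to Jeandel), every finitely generated subgroup of a strongly periodic group is strongly periodic; so if the conjecture is established for finitely generated groups, then a strongly periodic $G$ has all of its finitely generated subgroups virtually cyclic, i.e.\ $G$ is locally virtually cyclic. One would then still have to treat locally virtually cyclic groups that are not themselves virtually cyclic --- such as $\mathbb{Q}$, the Pr\"ufer groups, and infinite locally finite groups --- by a direct construction, since no subgroup or quotient argument reduces them to a known case.

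Within the finitely generated case I would split on growth. If the growth is polynomial of degree $\leq 1$ then $G$ is virtually cyclic, so there is nothing to prove; if $G$ has polynomial growth of degree $d \geq 2$, then Corollary \ref{c:polynomial} already shows $G$ is not strongly periodic. This leaves groups of superpolynomial growth, where I would split on amenability: nonamenable finitely generated groups are not strongly periodic by the Block--Weinberger construction cited in the introduction, so everything reduces to \emph{amenable, finitely generated groups of superpolynomial growth}. For such a $G$ the strategy would be to produce a surjection onto some quotient $Q$ already known not to be strongly periodic --- for example $\Z^2$, a lamplighter group, or a group covered by the theorems of Marcinkowski--Nowak or of Jeandel --- with finitely generated kernel, and then apply Theorem \ref{t:extension}. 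For groups with enough solvable or elementary-amenable structure this should go through.

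The main obstacle is exactly the class left over: amenable finitely generated groups of superpolynomial growth possessing no useful finite-kernel quotient --- for instance finitely generated infinite torsion groups of intermediate growth that fall outside the prime-index hypothesis of Marcinkowski--Nowak --- together with the non-finitely-generated locally virtually cyclic groups. For all of these one needs an outright construction of a weakly aperiodic SFT, and each available method (the coarse-homological obstructions of Block--Weinberger and of Marcinkowski--Nowak, and the extension and commensurability arguments developed above) carries hypotheses these groups may violate. I expect that a complete proof will require either a new homological or coarse invariant that detects the absence of strongly periodic SFTs in the presence of amenability, or a new combinatorial tiling construction tailored to such groups; even the seemingly harmless case $G = \mathbb{Q}$ appears to demand a genuinely new idea.
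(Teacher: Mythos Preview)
The statement is presented in the paper as a \emph{conjecture}, not a theorem; the paper offers no proof of it, so there is no argument to compare your proposal against. Your write-up in fact recognizes this: you dispose of the direction ``virtually cyclic $\Rightarrow$ strongly periodic'' (which, incidentally, is the \emph{converse} of the biconditional as stated, not the forward implication) using exactly the reasoning the paper alludes to in the introduction together with Proposition~\ref{virtual}(\ref{vsp}), and you then correctly flag ``strongly periodic $\Rightarrow$ virtually cyclic'' as open, surveying the partial results (Corollary~\ref{c:polynomial}, Block--Weinberger for nonamenable groups, Marcinkowski--Nowak, Theorem~\ref{t:extension}) and the gaps they leave.

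So there is no error in your proposal, but there is also no proof: you have written an outline of a program, and you yourself identify the cases it cannot reach---amenable finitely generated groups of superpolynomial growth outside the scope of the cited constructions, and non-finitely-generated locally virtually cyclic groups such as $\mathbb{Q}$ or Pr\"ufer groups. Your assessment that these require a genuinely new idea is consistent with the paper's decision to leave the statement as a conjecture.
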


\bibliographystyle{plain}

\end{document}